\theoremstyle{plain}
\newtheorem*{theorem*}{Theorem}
\newtheorem*{lemma*} {Lemma}
\newtheorem*{corollary*} {Corollary}
\newtheorem*{proposition*}{Proposition}
\newtheorem*{conjecture*}{Conjecture}
\newtheorem{theorem}{Theorem}[section]
\newtheorem{lemma}[theorem]{Lemma}
\newtheorem*{theorem1*}{Theorem 1}
\newtheorem*{theorem2*}{Theorem 2}
\newtheorem*{theorem3*}{Theorem 3}
\newtheorem{corollary}[theorem]{Corollary}
\newtheorem{proposition}[theorem]{Proposition}
\theoremstyle{remark}
\newtheorem*{remark}{Remark}
\newtheorem*{definition}{Definition}
\newtheorem{example*}{Example}
\newtheorem*{claim}{Claim}
\newtheorem*{ack}{Acknowledgement}
\theoremstyle{definition}
\def\G{\Gamma}
  \def\F{\Bbb{F}} \def\Z{\Bbb{Z}} \def\R{\Bbb{R}} 
\def\N{\Bbb{N}}   \def\ll{\langle} \def\rr{\rangle}
 \def\a{\alpha}   \def\bp{\begin{pmatrix}}
\def\sm{\setminus} \def\ep{\end{pmatrix}} \def\bn{\begin{enumerate}} 
   \def\en{\end{enumerate}}
\def\ba{\begin{array}} \def\ea{\end{array}}  
   \def\a{\alpha}
\def\ker{\mbox{Ker}}\def\be{\begin{equation}} \def\ee{\end{equation}} 
 \def\hom{\mbox{Hom}}
\begin{document}
\title[On the topology of Symplectic Calabi--Yau $4$--manifolds]{On the topology of Symplectic Calabi--Yau $4$--manifolds}
\author{Stefan Friedl}
\address{Mathematisches Institut\\ Universit\"at zu K\"oln\\   Germany}
\email{sfriedl@gmail.com}

\author{Stefano Vidussi}
\address{Department of Mathematics, University of California,
Riverside, CA 92521, USA} \email{svidussi@math.ucr.edu} \thanks{S. Vidussi was partially supported by NSF grant
\#0906281.}
%\date{\today}

\begin{abstract}
Let $M$ be a $4$-manifold with residually finite fundamental group $G$ having $b_1(G) > 0$. Assume that $M$ carries a symplectic structure with trivial canonical class $K = 0 \in H^2(M)$. Using a theorem of Bauer and Li, together with some classical results in $4$--manifold topology, we show that for a large class of groups $M$ is determined up to homotopy  and, in favorable circumstances,  up to homeomorphism by its fundamental group. This is analogous to what was proven by Morgan--Szab\'o in the case of $b_1 = 0$ and provides further evidence to the conjectural classification of symplectic $4$--manifolds with $K = 0$. As a side, we obtain a result that has some independent interest, namely the fact that the fundamental group of a surface bundle over a surface is large, except for the obvious cases.
\end{abstract}
\maketitle

%===================================================

\vspace{-0.55cm}
\subsection{Introduction}
A classification, even conjectural, of symplectic manifolds of (real) dimension $4$ can be considered at the moment out of reach. In this realm we must currently content ourselves with tackling this problem for some limited class of manifolds where the problem becomes more tractable. An example of that, that mimics the approach common in complex geometry, is to study manifolds of a given Kodaira dimension (see \cite{Li06a}). This approach has allowed a complete classification, up to diffeomorphism, in the case of Kodaira dimension $\kappa = - \infty$.
The next step is to tackle the case of $\kappa =0$ where some encouraging results, described in part below, are already available. For sake of presentation, here we will be concerned with the case of trivial canonical class $K = 0$, the difference being almost immaterial, see \cite[Theorem 2.4 and Proposition 6.3]{Li06a}.

Examples of symplectic manifolds with $K = 0$ are familiar, but quite exceptional: in particular, the only known example with trivial fundamental group is the $K3$ surface. Motivated by this fact, Morgan and Szab\'o proved in \cite{MS97} that a simply connected symplectic $4$--manifold with $K = 0$ is  homotopy equivalent (hence homeomorphic, by Freedman's work) to the $K3$ surface. In fact, as remarked in \cite[Corollary 1.4]{Bau08},
their result extends to all manifolds with $b_1 = 0$ as long as the fundamental group has nontrivial finite quotients. From the vantage point of this note, it is convenient to rephrase the main result of \cite{MS97} in terms of fundamental groups.
We begin by introducing the following notation.
\begin{definition} A closed $4$--manifold $M$ is a \textit{Symplectic Calabi--Yau} (SCY for short) manifold if it admits a symplectic structure with $K = 0 \in H^2(M;\Z)$.
A finitely presented group is an \emph{SCY group} if it is the fundamental group of an SCY $4$--manifold. \end{definition}
(Henceforth all groups will be implicitly assumed to be finitely presented.)  We will state a weaker version of the main result of \cite{MS97}, restricting ourselves to residually finite groups, more manageable for our purposes.

\begin{theorem}  \textbf{\emph{(Morgan--Szab\'o)}} \label{thm:morsza} Let $G$ be a residually finite SCY group with $b_1(G) = 0$. Then the following holds true:
\begin{enumerate}
\item $G$ is the trivial group;
\item the corresponding SCY manifolds are homotopy equivalent, hence unique up to homeomorphism.
\end{enumerate}  \end{theorem}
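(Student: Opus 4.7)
The plan is to prove (1) and (2) in sequence, reducing the residually finite case to the simply connected case that Morgan--Szab\'o handle in their original paper. Assertion (2) will follow immediately from (1): once $G = 1$, $M$ is simply connected, and the original Morgan--Szab\'o theorem identifies its intersection form with the even unimodular form of signature $(3,19)$ (i.e.\ the $K3$ form); Freedman's topological classification of simply connected $4$--manifolds by their intersection form then identifies $M$ with the $K3$ surface up to homeomorphism, and in particular up to homotopy equivalence. So the real content is assertion (1).

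To prove (1), I would first extract the topology of $M$ itself. Taubes' theorem gives $SW_M(K) = \pm 1$, so the trivial $\spinc$ structure is a basic class with SW invariant $\pm 1$. The Morgan--Szab\'o Seiberg--Witten analysis, applied under the hypothesis $b_1(M) = 0$, pins down $b_2^+(M) = 3$; combined with the identity $K^2 = 2\chi(M) + 3\sigma(M) = 0$ coming from $K = 0$, this forces $b_2^-(M) = 19$, $\chi(M) = 24$, and $\sigma(M) = -16$, so $M$ already has the numerical invariants of $K3$.

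Next I would rule out $G \neq 1$ by a covering argument. Suppose $G$ is nontrivial; by residual finiteness, choose a proper finite-index subgroup $G_0 \leq G$ of some index $d \geq 2$, with associated $d$-fold cover $\tilde M \to M$. Since $K = 0$ pulls back to $0$, the cover $\tilde M$ is itself an SCY manifold, and multiplicativity of the Euler characteristic gives $\chi(\tilde M) = d \cdot \chi(M) = 24 d \geq 48$. This contradicts the universal bound on the Euler characteristic of SCY manifolds established by Bauer in \cite[Corollary 1.4]{Bau08}, which is precisely the extension of Morgan--Szab\'o to arbitrary SCY $4$-manifolds (with no restriction on $b_1$) via the stable-homotopy refinement of Seiberg--Witten theory. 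Hence $G = 1$, and we are back in the simply connected situation handled above.

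The main obstacle is precisely this last step: the cover $\tilde M$ has no reason to satisfy $b_1(\tilde M) = 0$, so the Morgan--Szab\'o analysis used to control $M$ does not apply directly to $\tilde M$. The Bauer--Furuta stable-homotopy refinement is exactly what removes the $b_1$ hypothesis and provides the universal bound on the topology of an SCY manifold needed to close the covering argument; this is the only non-classical input in the proof.
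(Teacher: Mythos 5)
Your proposal is correct and follows essentially the route the paper itself relies on: the paper offers no independent proof of this theorem, citing Morgan--Szab\'o for the $b_1=0$ Seiberg--Witten analysis and Bauer's remark (\cite[Corollary 1.4]{Bau08}), whose content is precisely your covering trick — pass to a proper finite cover (available by residual finiteness if $G\neq 1$), note it is again SCY, and contradict the Bauer/Bauer--Li constraints ($\chi=0$ when the cover has $b_1>0$, $\chi\le 24$ when $b_1=0$) against $\chi(\wti M)=24d$. The only quibble is bibliographic rather than mathematical: the universal Euler characteristic bound you use is Bauer's main theorem (together with Li's work, i.e.\ Theorem \ref{thm:bauerli} of the paper), not literally his Corollary 1.4, which is the extension statement itself.
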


At this point we can ask if results similar to those of Theorem \ref{thm:morsza} hold  for groups  with $b_1(G) > 0$.  The geography of known SCY manifolds with positive first Betti number, as we discuss in Section \ref{sec:examples},  is fairly limited and is composed exclusively by infrasolvmanifolds, all (finitely covered by) torus bundles over a torus. Donaldson \cite{Do08} and Li \cite{Li06a} have suggested the possibility that this list is complete.
This suggestion is supported by work of  Bauer and Li  in \cite{Bau08,Li06b}:

\begin{theorem} \textbf{\emph{(Bauer, Li)}} \label{thm:bauerli}  Let $G$ be a SCY group with $b_1(G) > 0$. Then  the following holds true:
\begin{enumerate}
\item $2 \leq b_{1}(G) \leq 4$;
\item the corresponding SCY manifolds satisfy $\chi(M) = \sigma(M) = 0$.
\end{enumerate} \end{theorem}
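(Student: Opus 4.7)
The hypothesis that $(M,\omega)$ is symplectic with $K = 0$ provides two immediate handles. First, a symplectic manifold is almost complex, and the standard Chern number identity $c_1(M)^2 = 2\chi(M) + 3\sigma(M)$ combined with $c_1 = -K = 0$ yields
\[
2\chi(M) + 3\sigma(M) = 0.
\]
Second, Taubes' theorem applied to the canonical $\spinc$-structure $\mathfrak{s}_K$ (for which $c_1(\mathfrak{s}_K) = -K = 0$) gives $SW_M(\mathfrak{s}_K) = \pm 1$, and the expected moduli-space dimension is $d(\mathfrak{s}_K) = (c_1(\mathfrak{s}_K)^2 - 2\chi - 3\sigma)/4 = 0$, consistent with the identity above.

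To prove $\chi(M) = 0$ I would invoke the more refined analysis of Bauer and Li: in the SCY regime with $b_1 > 0$, a structure theorem forces the set of Seiberg--Witten basic classes to consist of the single element $0$. Combining this with either the wall-crossing formula (when $b^+(M) = 1$) or, when $b^+(M) > 1$, with the nontriviality of Bauer's stable-cohomotopy refinement and its accompanying divisibility constraints on $(\chi,\sigma)$, should pin down the extra relation $\chi(M) = 0$. Coupled with $2\chi + 3\sigma = 0$ this immediately gives $\sigma(M) = 0$, which is part (2).

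For part (1), the Euler characteristic and signature identities $\chi = 2 - 2b_1 + b_2 = 0$ and $\sigma = b^+ - b^- = 0$, together with $b_2 = b^+ + b^-$, yield $b^+(M) = b_1(G) - 1$. Since any symplectic $4$-manifold has $b^+ \geq 1$, this forces $b_1(G) \geq 2$. The upper bound $b_1(G) \leq 4$ then follows from Bauer's inequality $b^+(M) \leq 3$ for SCY manifolds, once more a consequence of the Bauer--Furuta stable cohomotopy refinement.

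The principal obstacle, and the true content of \cite{Bau08,Li06b}, is the derivation of the two additional constraints $\chi(M) = 0$ and $b^+(M) \leq 3$: neither is forced by classical Seiberg--Witten theory alone, and the stable cohomotopy refinement together with the structure of the basic class set in symplectic Kodaira dimension zero appears essential, since otherwise the line $2\chi + 3\sigma = 0$ leaves infinitely many Betti number profiles consistent with the classical constraints. Everything else in the statement is bookkeeping on Betti numbers after these two inputs are secured.
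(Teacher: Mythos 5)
This statement is not proved in the paper at all: it is imported verbatim from Bauer \cite{Bau08} and Li \cite{Li06b}, so there is no internal argument to compare yours against. Judged on its own terms, your proposal is sound only in its elementary bookkeeping: $c_1^2=2\chi+3\sigma=0$ for an almost complex manifold with $c_1=-K=0$; under $\chi=\sigma=0$ one gets $b^+=b_1-1$, and $b^+\geq 1$ (from $[\omega]^2>0$) together with $b^+\leq 3$ gives $2\leq b_1\leq 4$. But the two inputs you yourself identify as the crux, namely $\chi(M)=0$ and $b^+(M)\leq 3$, are exactly the content of the cited theorems, and your sketch does not actually derive either one: the phrase ``should pin down the extra relation $\chi(M)=0$'' is where the proof would have to happen. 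Knowing that $0$ is the only Seiberg--Witten basic class does not by itself constrain $\chi$; and for $b^+=1$ with $b_1>0$ the wall-crossing formula (Li--Liu) involves an integral over the torus of flat connections and does not obviously force $\chi=0$, so the mechanism you propose there is speculative rather than established.

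The actual arguments of Bauer and Li exploit a structure your sketch never mentions: when $c_1=0$ the canonical $\spinc$-structure is spin, so the monopole map carries a quaternionic ($Pin(2)$-equivariant) symmetry, and the nonvanishing of the Bauer--Furuta stable cohomotopy invariant for symplectic manifolds then feeds into Furuta-type equivariant K-theory/cohomotopy estimates; these are what bound $b^+$ and $b_1$ and, combined with $2\chi+3\sigma=0$ and the almost complex parity constraint, pin down $\chi=\sigma=0$ in the $b_1>0$ regime. So your write-up correctly locates where the difficulty lies and handles the surrounding arithmetic, but as a proof it has a genuine gap: the central inequalities are cited in spirit, not proven, and the alternative routes you gesture at (wall-crossing, unspecified divisibility constraints) would not suffice as stated.
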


If $M$ is a SCY manifold all its finite covers are. The theorem above  entails therefore that $2 \leq b_1(G) \leq vb_1(G) \leq 4$, where $vb_{1}(G) = \mbox{sup}\{ b_{1}(G_{i}) | G_{i} \leq_{f.i.} G\}$.

The purpose of this note is to discuss how to bridge the gap between the statement of Theorem \ref{thm:bauerli} and that of  Theorem \ref{thm:morsza}, i.e. to attempt to give a topological classification for $b_1 > 0$ that mimics parts (1) and (2) of  Theorem \ref{thm:morsza}.

Regarding part (1), our work will be mostly devoted to apply Theorem \ref{thm:bauerli} to  compute certain group--theoretic invariants for SCY groups, to show that they are consistent with those of infrasolvmanifolds groups. The one merit
 is to show that  Theorem \ref{thm:bauerli}  constrains quite effectively SCY groups at least within some interesting classes of groups. In particular we will show, as consequence of  the recent work of Agol and Wise \cite{Ag12,Wi12}, the following alternative, that is possibly of independent interest:

\begin{theorem} Let $G$ be the fundamental group of a surface bundle over a surface $\Sigma_{h} \hookrightarrow M \to \Sigma_{g}$; then $G$ is either large  or $\mbox{max}\,(g,h) \leq 1$. \end{theorem}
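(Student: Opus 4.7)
\begin{proofc}
Let $G = \pi_1(M)$. The surface bundle gives a short exact sequence
\[
1 \to \pi_1(\Sigma_h) \to G \to \pi_1(\Sigma_g) \to 1,
\]
and I would argue by cases on the pair $(g,h)$, with all non-exceptional cases reducing to the ``essential'' one $g = 1$, $h \geq 2$.

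If $g \geq 2$, then $G$ surjects onto $\pi_1(\Sigma_g)$; if $g = 0$ with $h \geq 2$, then $G \cong \pi_1(\Sigma_h)$. In either situation the problem reduces to exhibiting a surjection $\pi_1(\Sigma_k) \twoheadrightarrow F_2$ for $k \geq 2$, which is elementary: collapse to $\pi_1(\Sigma_2) = \langle a_1, b_1, a_2, b_2 \mid [a_1,b_1][a_2,b_2]\rangle$ and send $a_i, b_i \mapsto x_i$, killing both commutators and hence the relator. Thus in these cases $G$ surjects onto $F_2$ and is \emph{a fortiori} large.

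The main case is $g = 1$, $h \geq 2$. I would restrict the bundle to a primitive loop $\gamma \subset T^2$; the preimage is a closed 3-manifold $N$, a $\Sigma_h$-bundle over $S^1$, and the inclusion $N \hookrightarrow M$ identifies $\pi_1(N)$ with an infinite-index normal subgroup of $G$ fitting into $1 \to \pi_1(N) \to G \to \Z \to 1$, so that $G \cong \pi_1(N) \rtimes_\phi \Z$. Since $h \geq 2$, the 3-manifold $N$ admits no geometric structure modelled on Nil, Sol, $S^3$, or $S^2 \times \R$; by the Agol--Wise theorems $\pi_1(N)$ is therefore virtually compact special, and combined with Agol's virtual fibering theorem one concludes that $\pi_1(N)$ is large. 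To transfer largeness to $G$ itself, I would exploit the mapping-torus structure: characteristic finite-index subgroups of $\pi_1(N)$ supplied by the Agol--Wise hierarchy are automatically $\phi$-invariant, yielding finite-index subgroups of $G$ of the form $K \rtimes_{\phi^n}\Z$. Combined with virtual specialness extended along the mapping torus, this places $G$ among virtually RFRS groups; since $b_1(G) \geq 2$, Agol's algebraic fibering theorem (iterated once in the fiber direction) then produces a finite-index subgroup of $G$ surjecting onto a non-abelian free group.

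The principal obstacle is this last step. The infinite index of $\pi_1(N)$ in $G$ means that no finite-index subgroup of $\pi_1(N)$ is finite-index in $G$, and so largeness cannot be transferred by a naive normal-closure argument. The Agol--Wise framework is essential precisely because it supplies finite-index structure compatible with the monodromy $\phi$, allowing one to establish virtual specialness (or at least virtual RFRS) for $G$ itself, from which the required surjection onto a free group is extracted via algebraic fibering in two independent directions.
\end{proofc}
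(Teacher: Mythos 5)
Your reduction of the easy cases and your setup of the essential case ($g=1$, $h\geq 2$, so $G=\pi_1(N)\rtimes_\phi\Z$ with $N$ a $\Sigma_h$--bundle over $S^1$ and $\pi_1(N)$ large by Agol--Wise) agree with the paper. But the step you yourself flag as the principal obstacle is a genuine gap, and the fix you sketch does not close it. Passing to characteristic finite-index subgroups of $\pi_1(N)$ only produces finite-index subgroups of $G$ of the form $K\rtimes_{\phi^n}\Z$; such a semidirect product does not retract onto $K$, so largeness of $K$ does not transfer. The claim that $G$ is virtually special or virtually RFRS is not established: Agol--Wise gives virtual specialness of the $3$--manifold group $\pi_1(N)$, and this property is not inherited by the extension $\pi_1(N)\rtimes\Z$ (whether surface-bundle-over-surface groups are virtually special is precisely the kind of question the theorem is circumventing, not a consequence of it). Finally, even granting virtual RFRS, the algebraic fibering theorems you invoke produce finite-index subgroups surjecting onto $\Z$ with finitely generated kernel, not onto a nonabelian free group; there is no ``iterate the fibering in two directions to get $F_2$'' principle, and $\Z^2$ (RFRS, $b_1=2$, not large) shows that RFRS together with $b_1\geq 2$ cannot suffice. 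So the concluding step is both unsupported and, as a general mechanism, false.

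The paper resolves exactly this transfer problem by a Nielsen--Thurston trichotomy on the monodromy of the $3$--manifold fiber $F_t=N$ along the \emph{other} circle direction of the base torus. If that monodromy is pseudo-Anosov, $N$ is hyperbolic, so by Mostow rigidity the $4$--manifold monodromy has finite order up to isotopy and a finite cyclic cover of $M$ is the product $N\times S^1$; if it is periodic, one instead finds a finite cover of the form $F_s\times S^1$. In a product $F\times S^1$ the group $\pi_1(F)\times\Z$ retracts onto the large group $\pi_1(F)$, so largeness transfers. In the reducible case no product cover is available, and the paper works harder: separability of the JSJ torus in $\pi_1(F_t)$ (Long--Niblo), a lemma extending finite quotients of $\pi_1(F_t)$ over a finite-index subgroup of the mapping torus, Kojima's result to make the torus nonseparating in a cover, and then an HNN-extension argument in the style of Lubotzky/Serre producing a quotient $\ll A,t\mid B_-=tB_+t^{-1}\rr$ with $B_\pm\lneq A$ finite, which is virtually a nonabelian free group. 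Some argument of this kind, handling the monodromy directly, is what your proposal is missing.
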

(Recall that a group $G$ is large if a finite index subgroup surjects onto a nonabelian free group. In such case, $ vb_1(G) = \infty$.\footnote{
While preparing the final version of this paper we learned that, independently, R. \.{I}. Baykur  (see \cite{Ba12}) and T. J. Li--Y. Ni (see \cite{LNi12}) obtained, under the same assumptions, similar conclusions on the virtual Betti number.})

While it appears very unlikely that the constraints of Theorem \ref{thm:bauerli} characterize SCY groups,  we will show that for the $3$--dimensional analog (namely determining, among all $3$--manifolds, the class of those manifolds $N$ that fiber over $S^1$ with a fibration with Euler class $e = 0  \in H^2(N;\Z)$) the condition $1 \leq b_1(G) \leq vb_1(G) \leq 3$ is ``almost" sufficient to characterize the class.
  (The ``almost" is due to the presence of  $S^1 \times S^2$.)  This is a rather straightforward consequence of highly nontrivial facts on $3$--manifold groups (culminating in \cite{Ag12,Wi12}).

Our strongest
 results are about  part (2) and show, for a large class of SCY groups, the uniqueness of the homotopy type for the corresponding manifold:

\begin{theorem} \label{thm:main} Let $G$ be a residually finite SCY group with $b_1(G) > 0$. Assume $H^2(G;\Z[G]) = 0$. Then the corresponding SCY manifolds are homotopy equivalent  Eilenberg--Maclane spaces $K(G,1)$. \end{theorem}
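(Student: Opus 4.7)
The strategy is to prove that every such SCY manifold $M$ is aspherical, from which $M \simeq K(G,1)$ and uniqueness of the homotopy type follow. Since $b_1(G)>0$ the group $G$ is infinite, so the universal cover $\wti M$ is a simply-connected \emph{open} orientable $4$-manifold; by Hurewicz and Whitehead's theorem it suffices to show $H_2(\wti M;\Z)=0=H_3(\wti M;\Z)$. Via Poincar\'e--Lefschetz duality on $\wti M$ together with the standard identification $H^*_c(\wti M;\Z)\cong H^*(M;\Z[G])$, these two groups are isomorphic to $H^2(M;\Z[G])$ and $H^1(M;\Z[G])$ respectively, so the whole problem becomes the computation of two cohomology groups of $M$ with coefficients in the regular local system.

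The first ingredient is the vanishing of all $L^2$-Betti numbers of $\wti M$. Any finite cover $M_i\to M$ is again SCY, hence by Theorem \ref{thm:bauerli} satisfies $b_1(M_i)\leq 4$. Residual finiteness of $G$ lets us invoke L\"uck's approximation theorem, giving $b^{(2)}_1(\wti M)=\lim_i b_1(M_i)/[G:G_i]=0$; Poincar\'e duality yields $b^{(2)}_3=0$, and $b^{(2)}_0=b^{(2)}_4=0$ since $G$ is infinite. Combined with $\chi(M)=0=\sum_k(-1)^k b^{(2)}_k(\wti M)$ this forces $b^{(2)}_2(\wti M)=0$ as well.

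Next I bring in the hypothesis $H^2(G;\Z[G])=0$ via the Cartan--Leray spectral sequence
\[
E_2^{p,q}=H^p(G;H^q(\wti M;\Z[G]))\Longrightarrow H^{p+q}(M;\Z[G]).
\]
Simple connectivity of $\wti M$ gives $H^1(\wti M;\Z[G])=0$, hence $E_2^{p,1}=0$; the universal coefficient theorem combined with $H_1(\wti M)=0$ identifies $E_2^{0,2}$ with $\Hom_{\Z[G]}(\pi_2(M),\Z[G])$; and the hypothesis kills $E_2^{2,0}$. It follows that $H^2(M;\Z[G])$ embeds as a $\Z[G]$-submodule of $\Hom_{\Z[G]}(\pi_2(M),\Z[G])$. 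Feeding this back through the duality identification $\pi_2(M)=H_2(\wti M)=H^2(M;\Z[G])$ yields a natural $\Z[G]$-linear injection
\[
\pi_2(M)\hookrightarrow \Hom_{\Z[G]}(\pi_2(M),\Z[G]),
\]
and the $b^{(2)}_2=0$ vanishing, together with L\"uck's $\mathcal{N}(G)$-dimension theory on arbitrary $\Z[G]$-modules, should now force $\pi_2(M)=0$. A parallel but easier argument for $H^1$ (using that $b_1(G)\geq 2$ rules out the two-ended case and the bound $vb_1(G)\leq 4$ from Bauer--Li rules out the infinitely-ended case, so $H^1(G;\Z[G])=0$) gives $H_3(\wti M)=0$, completing the proof of asphericity.

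The main technical hurdle is the step where I deduce $\pi_2(M)=0$ from the combination of the self-dual embedding and $\mathcal{N}(G)$-dimension zero: this is where residual finiteness enters decisively beyond the mere invocation of L\"uck's approximation theorem, and where one must use that the injection arising from the spectral sequence agrees, under Poincar\'e duality, with the intersection form on $\pi_2(M)$. Everything else is fairly standard Poincar\'e-duality bookkeeping on the universal cover, combined with the Bauer--Li constraints of Theorem \ref{thm:bauerli}.
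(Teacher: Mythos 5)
Your first half (vanishing of $b_0^{(2)},b_1^{(2)},b_2^{(2)}$ of $\wti M$ via residual finiteness, L\"uck approximation, the Bauer--Li bound $b_1\leq 4$ and $\chi(M)=0$) is exactly what the paper does, and your Cartan--Leray/Poincar\'e duality bookkeeping giving the embedding $\pi_2(M)\cong H^2(M;\Z[G])\hookrightarrow \Hom_{\Z[G]}(\pi_2(M),\Z[G])$ when $H^2(G;\Z[G])=0$ is correct. The difference is that the paper does not reprove the hard step at all: it quotes Eckmann's theorem (\cite[Theorem 6]{E97}), which says precisely that a closed $4$--manifold with $b_0^{(2)}=b_1^{(2)}=b_2^{(2)}=0$ and $H^2(G;\Z[G])=0$ is aspherical unless $G$ is virtually infinite cyclic (the latter excluded here since $b_1(G)\geq 2$). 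The step you flag as ``the main technical hurdle'' and dispatch with ``should now force $\pi_2(M)=0$'' is exactly the content of Eckmann's theorem, and as written it is a genuine gap. Moreover, the mechanism you suggest does not work: from $b_2^{(2)}(\wti M)=0$ you cannot conclude $\dim_{\mathcal{N}(G)}\bigl(\mathcal{N}(G)\otimes_{\Z[G]}\pi_2(M)\bigr)=0$, because $\mathcal{N}(G)$ is not dimension-flat over $\Z[G]$ for non-amenable $G$: in the universal-coefficients spectral sequence computing $H_*(\mathcal{N}(G)\otimes_{\Z[G]}C_*(\wti M))$ the term $\mathcal{N}(G)\otimes\pi_2$ is only a quotient-then-sub of $H_2$, up to corrections by modules such as $\Tor_3^{\Z[G]}(\mathcal{N}(G),\Z)$, which can have positive dimension (already $\Tor_1^{\Z[F_2]}(\mathcal{N}(F_2),\Z)$ has dimension $1$). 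So ``$l^2$-dimension zero plus self-dual embedding'' is not a formal consequence of what you have established; Eckmann's actual argument uses the Hilbert-module chain complex and the specific duality pairing, and residual finiteness plays no role in it (contrary to your remark, residual finiteness is used in the paper only to run L\"uck approximation).

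Two smaller points. First, your treatment of $H_3(\wti M)\cong H^1(G;\Z[G])$ is also not justified as stated: $vb_1(G)\leq 4$ does not rule out infinitely many ends (there are infinitely-ended groups, e.g.\ free products of $\Z^2$ with a finitely presented infinite simple group, whose virtual first Betti number stays bounded). The correct tool is the one you already have, $b_1^{(2)}(G)=0$, since a finitely generated group with infinitely many ends has positive first $L^2$-Betti number; alternatively this case is again absorbed by Eckmann's theorem. Second, finite generation of $\pi_2(M)$ over $\Z[G]$ is not automatic, so any dimension-theoretic argument must be set up for arbitrary modules, as you implicitly acknowledge. In short: either cite \cite[Theorem 6]{E97} as the paper does, or supply a genuine proof of the implication ``$b_2^{(2)}=0$ and $\pi_2\hookrightarrow\Hom_{\Z[G]}(\pi_2,\Z[G])$ imply $\pi_2=0$''; the proposal as it stands assumes it.
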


In the framework of residually finite fundamental groups, Theorem \ref{thm:main} reduces the study up to homotopy of SCY $4$--manifolds with $b_1 > 0$ to determining SCY groups, under the assumption that $H^2(G;\Z[G]) = 0$.
 This assumption holds for large classes of groups, in particular all virtual duality groups of virtual cohomological dimension at least $3$  (see \cite[Proposition~VIII.11.3]{Br94}). Remarkably for us, this is the case for the fundamental groups of all known examples of SCY manifolds, as they are Poincar\'e duality groups of dimension $4$. These groups are  virtually poly--$\Z$ hence residually finite. Also, as for virtually poly--$\Z$ groups the Borel conjecture
  holds true (see \cite{FJ90}) in dimension 4, we thus have the following:

\begin{corollary} \label{cor:unique} Let $G$ be a SCY group arising as fundamental group of an infrasolvmanifold. Then the corresponding SCY manifolds are unique up to homeomorphism.
\end{corollary}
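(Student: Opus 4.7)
The plan is to assemble this as a direct corollary of Theorem \ref{thm:main} combined with the Farrell--Jones verification of the Borel conjecture. First I would verify the hypotheses of Theorem \ref{thm:main}. An infrasolvmanifold is a closed aspherical $4$--manifold whose fundamental group $G$ is virtually poly--$\Z$. Such $G$ is residually finite (a standard fact about polycyclic groups and hence their finite extensions), so the residual finiteness assumption of Theorem \ref{thm:main} is met. Moreover $G$ is a Poincar\'e duality group of dimension $4$, so in particular a virtual duality group of virtual cohomological dimension $4 \geq 3$; by the cited \cite[Proposition~VIII.11.3]{Br94} this forces $H^{2}(G;\Z[G]) = 0$, the remaining hypothesis of Theorem \ref{thm:main}.

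With the hypotheses in place, Theorem \ref{thm:main} gives that any SCY manifold $M$ with $\pi_{1}(M) = G$ is homotopy equivalent to a $K(G,1)$. The same is true, tautologically, of the background infrasolvmanifold $N$ associated to $G$, since $N$ is aspherical. So any two SCY manifolds $M_{1}, M_{2}$ realizing $G$ as fundamental group are both aspherical closed $4$--manifolds, and both homotopy equivalent to $K(G,1)$, hence homotopy equivalent to one another.

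To promote homotopy equivalence to homeomorphism I would invoke the Borel conjecture in dimension $4$ for virtually poly--$\Z$ groups, established by Farrell--Jones \cite{FJ90}: any homotopy equivalence between closed aspherical $4$--manifolds with virtually poly--$\Z$ fundamental group is homotopic to a homeomorphism. Applying this to $M_{1} \simeq M_{2}$ yields $M_{1} \cong M_{2}$, proving uniqueness up to homeomorphism.

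Since every step is a direct application of a result that is either well known or explicitly cited in the excerpt, I do not foresee a genuine obstacle; the only point requiring a brief comment is the verification that $H^{2}(G;\Z[G]) = 0$ for virtually poly--$\Z$ groups of Hirsch length $4$, which however is immediate from the duality group structure.
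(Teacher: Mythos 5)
Your proposal is correct and follows essentially the same route as the paper: verify residual finiteness (virtually poly--$\Z$, Hirsch) and $H^2(G;\Z[G])=0$ (from $G$ being a $\mathrm{PD}_4$ group since the infrasolvmanifold is aspherical), apply Theorem \ref{thm:main} to get asphericity and a common homotopy type, then upgrade to homeomorphism via the Borel conjecture for virtually poly--$\Z$ groups in dimension $4$. The only cosmetic difference is that the paper also cites the Freedman--Quinn surgery machinery \cite{FQ90} alongside \cite[Theorem 2.16]{FJ90} as the source of the $4$--dimensional rigidity statement, which is implicit in your appeal to Farrell--Jones.
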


Combined with Theorem \ref{thm:morsza} the corollary asserts that, for all known examples, the fundamental group determines in fact the homeomorphism type of SCY $4$--manifolds.

We want to stress   that the results presented above sit at the intersection of symplectic topology and $4$--manifold topology. Namely, they  provide constraints on the type of $M$  that emerge only in this dimension; it is otherwise known by \cite{FP11} that in dimension $6$ for any finitely presented group $G$, there exists a symplectic manifold $Z$ with canonical class $K = 0$ and $\pi_1(Z) = G$ and much latitude on the choice of higher Betti numbers. (In dimension $2$, of course, the only admissible $G$ is $\Z^2$.)

We end with a comment regarding the classification up to diffeomorphism. It is often expected that in dimension $4$  every homeomorphism class of smooth manifolds admits multiple, or even infinite, smoothings. This expectation, however, is founded mostly on our understanding of simply--connected $4$--manifolds. Eilenberg--Maclane spaces may well exhibit a different behavior. Recently, Stern asked in \cite{St12} whether (symplectic) $4$--dimensional Eilenberg--Maclane spaces have at most one smooth structure. The same result of uniqueness up to diffeomorphism for all examples covered by Corollary \ref{cor:unique} would follow, as discussed at the end of Section \ref{sec:topclass}, from a conjecture of Baldridge and Kirk \cite[Conjecture 23]{BK07}, that is motivated by a different circle of ideas.  It is not out of question therefore to expect that Corollary \ref{cor:unique} holds in the smooth category as well. Similarly, all known constructions of exotic simply connected $4$--manifolds fail to produce a symplectic manifold with $K = 0$ nondiffeomorphic to the $K3$ surface.  We can add, for the case of SCY manifolds, a further ingredient:  potentially nondiffeomorphic symplectic smoothings with  $K = 0$ would have to have, even for all finite covers,
 the same Seiberg--Witten invariants. In fact, Taubes' constraints imply that $K = 0$ is the only basic class, and the same situation occurs for all finite covers. This implies in particular that different smoothings would not be distinguishable with any of the known smooth invariants.

\begin{ack} We would like to thank Dieter Kotschick for some useful remarks on the value of the Hausmann--Weinberger and Kotschick invariants of the fundamental group of a surface bundle over a surface. We are also grateful to the referee for carefully reading this paper.\end{ack}

\section{SCY groups} \label{sec:groups}

\subsection{Examples of SCY manifolds with $b_{1} > 0$} \label{sec:examples}

To the best of the authors' knowledge, all known SCY manifolds with $b_{1} > 0$ are \textit{infrasolvmanifolds}. (We refer to  \cite{Hi02} for the rather elaborate definition of infrasolvmanifold and a discussion of their basic properties, from which we will draw in what follows.) In fact, these have appeared in the literature in various forms, and we attempt here to describe them in a uniform way:
\\ -- $T^2$--bundles over $T^2$. These manifolds are symplectic, by \cite{Ge92}, and it is well--known
 (see e.g. \cite{Li06a}) that for these manifolds $K = 0$. $T^2$--bundles over $T^2$ admit a solvmanifold structure, and in fact they  constitute the entire class of solvmanifolds with $2 \leq b_1 \leq 4$ (see e.g. \cite[Proposition 1]{Ha05});
\\ --  $S^1$--bundles over a torus bundle $T^2 \hookrightarrow N^3 \rightarrow S^1$, with the $S^1$--bundle restricting trivially to the fiber $T^2$. These manifolds are symplectic, by \cite{FGM91}, and the fact that  $K = 0$ follows e.g. from \cite[Corollary 2.4]{McS96}. As these manifolds can be described as mapping tori of a selfdiffeomorphism of $T^3$, they admit an infrasolvmanifold structure, see \cite[Chapter 8]{Hi02}. Some of these manifolds have the structure of $T^2$--bundles over $T^2$ (hence solvmanifolds) on the nose while other have an abelian cover that does (see \cite{FV11a}) but it is not clear if they all do;
 \\ -- Cohomologically symplectic infrasolvmanifolds. (Manifolds $M$ for which there exist a class in $H^2(M;\R)$ of positive square.) These are symplectic, by \cite{Ka11}. As they are covered by a $T^2$--bundle over $T^2$, $K$ must be torsion hence (using \cite[Corollary 2.4]{McS96} if $b_{+} = 1$)  they have $K = 0$.

In fact, by the observations above, the class of symplectic infrasolvmanifolds include all known examples of SCY manifolds. (It is not clear to the authors if there exist, in dimension $4$, symplectic infrasolvmanifolds which are not actual solvmanifold; if that does not happen, the list would therefore reduce to $T^2$--bundles over $T^2$.)

In \cite{FV11a} the authors showed that, for $4$--manifolds that admit a circle action, the list is complete.

The manifolds above are all Eilenberg--Maclane spaces, hence their fundamental groups are Poincar\'e duality groups of cohomological dimension $4$. As fundamental groups of infrasolvmanifolds, they are  virtually poly--$\Z$, hence residually finite. As $b_{1}(M) > 0$, these manifolds fiber over $S^1$ (\cite[Lemma 3.14]{Hi02}) hence by \cite{Lu94a} the
$L^2$--Betti number $b_{1}^{(2)}(M)$ vanishes. Two other integral invariants,  the Hausmann--Weinberger invariant $q(G)$ (\cite{HW85}) and the Kotschick invariant $p(G)$ (\cite{Ko93}) defined respectively as
\[ q(G) = \mbox{inf} \{ \chi(X) \}, \ \ p(G) =  \mbox{inf} \{ \chi(X) - |\sigma(X)| \}\] (where the infimum is taken among all $4$--manifolds $X$ with $\pi_{1}(X) = G$) vanish as well (\cite[Corollary 3.12.4]{Hi02}).

\subsection{Bauer--Li constraints}

If we assume the point of view that symplectic infrasolvmanifolds should exhaust the class of SCY manifolds, we must look for evidence that SCY groups satisfy conditions known to hold for fundamental groups of that class of manifolds, such as the vanishing of the invariants discussed in the previous section.
In this section we will use the constraints of Theorem \ref{thm:bauerli} to determine these invariants for SCY groups.

First of all, part (1) of Theorem \ref{thm:bauerli}, together with  L\"uck's Approximation Theorem for $L^2$--invariants (\cite{Lu94}), yields the following:

\begin{lemma} \label{lemma:lueck} Let $G$ be a residually finite SCY group with  $b_{1}(G) > 0$. Then the $L^2$--Betti number $ b_{1}^{(2)}(G) = 0$. \end{lemma}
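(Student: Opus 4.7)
The approach is to approximate $b_1^{(2)}(G)$ from above by using L\"uck's theorem, combined with the uniform bound $b_1\le 4$ supplied by Theorem~\ref{thm:bauerli} for all SCY groups of positive first Betti number.

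First, using residual finiteness of $G$, I would choose a cofinal nested sequence $G=G_0\supset G_1\supset G_2\supset\cdots$ of finite-index normal subgroups with $\bigcap_n G_n=\{1\}$, and let $p_n\colon M_n\to M$ be the associated connected regular covers. Each $M_n$ is a closed smooth $4$-manifold with $\pi_1(M_n)=G_n$. Since the canonical class is natural under finite covers, the pulled back symplectic form on $M_n$ satisfies $p_n^\ast K=0$, so $M_n$ is itself a SCY manifold.

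Next, the standard transfer argument (equivalently, the injectivity of $H_1(G;\Q)\to H_1(G_n;\Q)$ for finite-index subgroups) gives $b_1(G_n)\ge b_1(G)>0$. Thus Theorem~\ref{thm:bauerli} applies to each $G_n$ and yields the uniform estimate $b_1(M_n)=b_1(G_n)\le 4$, independent of $n$.

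Finally, L\"uck's approximation theorem applied to the finite CW complex $M$ identifies
\[
b_1^{(2)}(G)\;=\;b_1^{(2)}(M)\;=\;\lim_{n\to\infty}\frac{b_1(M_n)}{[G:G_n]},
\]
where the first equality uses that the first $L^2$-Betti number of a connected CW complex of finite type depends only on its fundamental group. Since $b_1(G)>0$ forces $G$ to be infinite, the indices $[G:G_n]$ tend to infinity along the tower, and together with the uniform bound $b_1(M_n)\le 4$ this makes the limit vanish. There is no serious obstacle here beyond assembling the ingredients; the only point requiring a word of care is the identification of $b_1^{(2)}(M)$ with $b_1^{(2)}(G)$, which is standard in degree one.
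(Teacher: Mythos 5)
Your argument is correct and follows essentially the same route as the paper: take a cofinal nested tower of finite-index normal subgroups (available by residual finiteness), note each corresponding cover is again SCY so Theorem \ref{thm:bauerli} gives the uniform bound $b_1(G_i)\leq 4$, and conclude via L\"uck's approximation theorem that $b_1^{(2)}(G)=\lim_i b_1(G_i)/[G:G_i]=0$. The extra details you supply (naturality of $K=0$ under covers, the transfer argument guaranteeing $b_1(G_i)>0$, and the identification $b_1^{(2)}(M)=b_1^{(2)}(G)$) are fine and merely make explicit what the paper leaves implicit.
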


\begin{proof} As $G$ is residually finite, there exists a  nested cofinal sequence of normal finite index subgroups   $G_{i} \lhd G$. For this sequence, $\lim_{i} [G:G_i] = \infty$. By  \cite{Lu94},  we have \[    b_{1}^{(2)}(G) =  \lim_{i} \frac{b_1(G_i)}{ [G:G_i]}. \]  As $G$ is SCY, so is each finite index subgroup $G_{i} \lhd G$.  It then follows from Theorem \ref{thm:bauerli} that all Betti numbers $b_1(G_i)$ are bounded above by $4$, hence $b_{1}^{(2)}(G) = 0$. \end{proof}

This result allows us to determine immediately the two other integral invariants of $G$, $q(G)$ and $p(G)$.  Lemma \ref{lemma:lueck} implies, by a fairly standard argument, that these invariants vanish:

\begin{proposition}  \label{cor:min}
Let $G$ be a residually finite SCY group with  $b_{1}(G) > 0$.
 Then $q(G) = p(G) = 0$. \end{proposition}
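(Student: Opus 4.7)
The plan is to establish the two-sided bounds $q(G) = p(G) = 0$ by proving $q(G), p(G) \le 0$ and $q(G), p(G) \ge 0$ separately. The upper bound is immediate from Theorem~\ref{thm:bauerli}: since $G$ is SCY there exists a closed SCY $4$-manifold $M$ with $\pi_1(M) = G$, and Theorem~\ref{thm:bauerli} gives $\chi(M) = \sigma(M) = 0$. By definition of the two invariants, $M$ is a valid competitor for both infima, so $q(G) \le \chi(M) = 0$ and $p(G) \le \chi(M) - |\sigma(M)| = 0$.

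For the lower bound on $q(G)$, let $X$ be any closed oriented $4$-manifold with $\pi_1(X) = G$. Since $b_1(G) > 0$ the group $G$ is infinite, whence $b_0^{(2)}(X) = 0$. Poincar\'e duality for $L^2$-Betti numbers then gives
\[ \chi(X) = 2 b_0^{(2)}(X) - 2 b_1^{(2)}(X) + b_2^{(2)}(X) = -2 b_1^{(2)}(X) + b_2^{(2)}(X). \]
Applying L\"uck's approximation theorem along the cofinal nested sequence $\{G_i\}$ of normal finite-index subgroups used in the proof of Lemma~\ref{lemma:lueck}, and observing that for the associated finite covers $X_i \to X$ one has $b_1(X_i) = b_1(\pi_1(X_i)) = b_1(G_i)$, I get $b_1^{(2)}(X) = b_1^{(2)}(G) = 0$. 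Hence $\chi(X) = b_2^{(2)}(X) \ge 0$, and $q(G) \ge 0$ follows.

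For the lower bound on $p(G)$ I would further invoke the multiplicativity of the signature under finite covers, $\sigma(X_i) = [G:G_i]\,\sigma(X)$. Combining this with the elementary bound $|\sigma(X_i)| \le b_2(X_i)$, dividing by $[G:G_i]$, and passing to the limit via L\"uck, one obtains $|\sigma(X)| \le b_2^{(2)}(X) = \chi(X)$. Consequently $\chi(X) - |\sigma(X)| \ge 0$ for every such $X$, giving $p(G) \ge 0$.

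The essential ingredient is Lemma~\ref{lemma:lueck}; everything else is the standard $L^2$-Euler-characteristic computation for $4$-manifolds combined with the multiplicativity of the signature. The only slightly subtle point is the passage from $b_1^{(2)}(G) = 0$ to $b_1^{(2)}(X) = 0$, but this is immediate since $b_1$ of any connected space equals $b_1$ of its fundamental group, so the two L\"uck limits coincide term-by-term.
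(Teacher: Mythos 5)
Your proof is correct, and its overall architecture is the same as the paper's: the upper bound $q(G),p(G)\le 0$ comes from an SCY manifold realizing $\chi=\sigma=0$ via Theorem \ref{thm:bauerli}, and the lower bound comes from the $L^2$-Euler characteristic identity $\chi(X)=2b_0^{(2)}(X)-2b_1^{(2)}(X)+b_2^{(2)}(X)$ together with $b_0^{(2)}(X)=0$ ($G$ infinite) and $b_1^{(2)}(X)=b_1^{(2)}(G)=0$ from Lemma \ref{lemma:lueck}. The one place you genuinely diverge is the signature estimate needed for $p(G)\ge 0$: the paper reads it off from Atiyah's $L^2$-signature theorem, writing $\chi(X)=b_2^{(2)}(X)=b_{2,+}^{(2)}(X)+b_{2,-}^{(2)}(X)\ge |b_{2,+}^{(2)}(X)-b_{2,-}^{(2)}(X)|=|\sigma(X)|$, whereas you obtain $|\sigma(X)|\le b_2^{(2)}(X)$ by combining multiplicativity of the signature under finite covers, the elementary bound $|\sigma(X_i)|\le b_2(X_i)$, and L\"uck approximation in degree $2$. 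Both are valid; the paper's route avoids any use of approximation or residual finiteness at that step (only Lemma \ref{lemma:lueck} uses it), while yours trades the $L^2$-index theorem for signature multiplicativity plus approximation in degree $2$, which is fine here since residual finiteness is a standing hypothesis and L\"uck's theorem applies in all degrees. Your justification that $b_1^{(2)}(X)=b_1^{(2)}(G)$ via $b_1(X_i)=b_1(G_i)$ is also sound (the paper invokes the same identification implicitly).
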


\begin{proof} For any $4$--manifold $X$ with $\pi_{1}(X) = G$
 we have by standard facts of $L^2$-invariants (see e.g. \cite{Lu02}) that
\[ \chi(X) =   2 b_{0}^{(2)}(X) -2  b_{1}^{(2)}(X) +  b_{2}^{(2)}(X) = b_{2}^{(2)}(X).\]
Here we used the fact that $G$ is infinite, which implies  $b_{0}^{(2)}(X) = b_{0}^{(2)}(G) = 0$, and Lemma \ref{lemma:lueck}, which implies $b_{1}^{(2)}(X) = b_{1}^{(2)}(G) = 0$.  Therefore,  $\chi(X) \geq 0$ and $\chi(X) = b_{2}^{(2)}(X) = b_{2,+}^{(2)}(X) + b_{2,-}^{(2)}(X) \geq |b_{2,+}^{(2)}(X) - b_{2,-}^{(2)}(X)| = |\sigma(X)|$, from which $q(G) \geq 0$ and $p(G) \geq 0$ follow. On the other hand, by definition of SCY group  there exists a SCY manifold $M$ with $\pi_{1}(M) = G$ for which the equalities are attained. \end{proof}

A good amount of wishful thinking may give the expectation that the conditions $2 \leq b_1(G) \leq vb_1(G) \leq 4$, $q(G) = p(G) = 0$ characterize symplectic infrasolvmanifold groups, with the exclusion of the somewhat exceptional $\Z^2$ (the $2$--dimensional symplectic infrasolvmanifold group), see below. (The first condition is certainly not sufficient, as the example of $\Z^3$ shows; here, $q(\Z^3) = 2$, see \cite[Lemma 5.1]{Ko94}.) Most likely, this expectation is unfounded: it is however interesting to compare it with an (imperfect) analog of this problem, namely a characterization of $3$--manifolds that fiber over the circle with trivial Euler class in terms of virtual Betti numbers. We have the following.

\begin{proposition}
Let $G$ be a closed orientable $3$--manifold group
 such that $1 \leq b_{1}(G) \leq vb_1(G) \leq 3$;
 then either $G$ is the fundamental group of a (unique up to homeomorphism) $3$--manifold $N$ that fibers over the circle with Euler class $e = 0 \in H^2(N;\Z)$ or $G$ is infinite cyclic. \end{proposition}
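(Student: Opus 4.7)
The plan is to combine the prime decomposition and Geometrization of $3$--manifolds with the virtual fibering results of Agol, Wise, Przytycki--Wise, and Liu. First I would argue that $N$ must be prime. If $N$ decomposes as a nontrivial connected sum $N_1 \# \cdots \# N_k$ with $k \geq 2$, then $G$ is a nontrivial free product of at least two nontrivial factors. Every such product, with the sole exception of the infinite dihedral group $\Z/2 \ast \Z/2$, contains a nonabelian free subgroup of finite index and hence has $vb_1 = \infty$; but $\Z/2 \ast \Z/2$ itself has $b_1 = 0$. Both alternatives are excluded by our hypotheses, so $N$ is prime. A prime closed orientable $3$--manifold is either $S^2 \times S^1$ --- in which case $G = \Z$ and we are in the exceptional case of the proposition --- or irreducible. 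A finite fundamental group in the irreducible case would force $b_1 = 0$, so $\pi_1(N)$ is infinite and $N$ is aspherical by the Sphere Theorem.

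The main step is to show that an aspherical $N$ with $vb_1(N) \leq 3$ must have virtually polycyclic fundamental group, equivalently that $N$ carries an $\R^3$, Nil, or Sol structure. This relies on the fact --- which is the principal technical obstacle --- that every closed orientable aspherical $3$--manifold whose fundamental group is not virtually polycyclic satisfies $vb_1 = \infty$. For closed hyperbolic $3$--manifolds this is Agol's virtual fibering theorem; for Seifert fibered manifolds with hyperbolic base orbifold, one passes to a finite cover that is an $S^1$--bundle over a closed surface of genus $g \geq 2$ and observes that pulling back along covers of the base drives $b_1$ to infinity; and for graph manifolds and mixed manifolds with nontrivial JSJ decomposition this is the content of the work of Liu and Przytycki--Wise alluded to in the introduction.

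Once $N$ is known to carry an $\R^3$, Nil, or Sol structure, it is classical that every orientable representative with $b_1 \geq 1$ is a torus bundle over $S^1$: Sol manifolds are by definition mapping tori of Anosov automorphisms of $T^2$; Nil manifolds are $T^2$--bundles over $S^1$ with unipotent monodromy (dually to their $S^1$--bundle structure over $T^2$); and of the six orientable flat $3$--manifolds only the Hantzsche--Wendt manifold has $b_1 = 0$, while the other five are torus bundles over $S^1$. The Euler class of the vertical tangent bundle of such a fibration equals $\chi(T^2) \cdot \mathrm{PD}[\text{fiber}] = 0$. Finally, uniqueness of $N$ up to homeomorphism follows from the Borel conjecture for closed aspherical $3$--manifolds, which holds by Geometrization together with the classical rigidity of flat, Nil, and Sol manifolds by their fundamental groups.
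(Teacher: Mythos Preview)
Your argument is essentially correct and follows a somewhat different path from the paper's. The paper splits the aspherical case into atoroidal versus toroidal: in the atoroidal (hyperbolic) case Agol--Wise gives $vb_1 = \infty$; if $N$ contains an incompressible torus that is not a virtual fiber, Kojima and Luecke already showed $vb_1 = \infty$; in the remaining case $N$ is virtually a $T^2$--bundle, and the paper then uses a short direct argument (every nontrivial class in $H^1$ of a torus bundle is fibered, and fiberedness descends through finite covers) to conclude that $N$ itself is a torus bundle. You instead run through the geometries and finish with the classification of closed orientable $\R^3$/Nil/Sol manifolds with $b_1 \geq 1$. Both routes work; the paper's avoids the geometry-by-geometry case analysis at the end, while yours makes the structural conclusion (virtually polycyclic fundamental group) more explicit.

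Two remarks. First, your assertion that a nontrivial free product other than $\Z/2 \ast \Z/2$ ``contains a nonabelian free subgroup of finite index'' is false as stated --- for instance $\Z^3 \ast \Z^3$ is certainly not virtually free. What is true, and what you need, is that such a free product of residually finite groups is \emph{large}: one surjects onto a free product of nontrivial finite quotients of the factors, and any such free product (other than the infinite dihedral group) is virtually nonabelian free. The conclusion $vb_1 = \infty$ then follows. Second, for manifolds with nontrivial JSJ decomposition the relevant input predates Liu and Przytycki--Wise: the fact that an irreducible $3$--manifold containing an incompressible torus which is not a virtual fiber has $vb_1 = \infty$ is due to Kojima and Luecke, and this is the reference the paper invokes. (Also, not every closed orientable Sol manifold is a mapping torus --- there are unions of twisted $I$--bundles over the Klein bottle --- but those have $b_1 = 0$, so your restriction to $b_1 \geq 1$ saves the statement.)
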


\begin{proof} First, assume that $G$ is freely indecomposable.  By Kneser's Theorem, this is equivalent to $G$ being the fundamental group of a prime $3$--manifold. As a consequence $G$ is either finite, infinite cyclic, or the fundamental group of a $3$--dimensional aspherical manifold $N$. We exclude the first condition, for which $b_1(G) = 0$, and proceed with the last one. If $N$ is atoroidal, then by the work of Agol and Wise
 (\cite{Wi12} and \cite{Ag12}) we have  $vb_1(G) = \infty$, see also \cite{AFW12} for references.  Similarly, if $N$ contains an incompressible torus
 which is not a virtual fiber of a fibration, then  $vb_{1}(G) = \infty$ by \cite{Koj87,Lu88}. Therefore,  $N$ must contain a virtual torus fiber, that is promoted to a torus fiber in a cover $\pi : {\tilde N} \to N$. Now, as $b_{1}(G) > 0$, there is a nontrivial class $\phi \in H^{1}(N)$ that lifts to a nontrivial class in $\pi^{*} \phi \in H^{1}({\tilde N})$. As   ${\tilde N}$ is a torus bundle, by standard facts all nontrivial elements of $H^{1}({\tilde N})$ correspond to fibrations, hence $\pi^{*} \phi$ is a fibered class. But then so must $\phi$, namely $N$ itself is a  $T^2$--bundle over $S^1$. To complete the proof, observe that if $G$ is a nontrivial free product of fundamental groups of prime $3$--manifolds, as all $3$--manifold groups are residually finite, it is easy to see that $vb_1(G) = \infty $ on the nose.
\end{proof}

\subsection{Surface bundle groups}

All known SCY manifolds are finitely covered by a $T^2$--bundle over $T^2$, hence the corresponding SCY groups are virtually surface bundle groups. Virtual surface bundle groups are therefore a good starting point to probe how restrictive the constraints of Theorem \ref{thm:bauerli} are. The results on the numerical invariants described above provide a partial answer to this question. For those groups, L\"uck proved in \cite{Lu94a} the vanishing of $b_{1}^{(2)}(G)$, so Lemma \ref{lemma:lueck} is inconclusive. On the other hand, Kotschick proved in \cite[Theorem 3.8]{Ko94} that aspherical manifolds realize the values of $q(p)$ and $p(G)$;  this, and \cite[Theorem 2]{Ko98}, entail that $q(G) \geq p(G) > 0$ whenever $G$ is the fundamental group of a surface bundle over a surface where base and fiber have genus greater than $1$. Proposition \ref{cor:min} applies then to show that such a $G$ is not an SCY group.

 We will see that we can complete this result by verifying (with one exception) that, in the class of groups that arise as fundamental groups of a surface bundle over a surface,  only the groups discussed in  Section \ref{sec:examples} are SCY. This is a consequence of the following result, which has some interest \textit{per se}. We denote by $\Sigma_{g}$ an orientable surface of genus $g$.

\begin{theorem} \label{thm:surfacesurface}
Let $\Sigma_{h} \hookrightarrow M \rightarrow \Sigma_{g}$ be a surface bundle over a surface. Then $\pi_1(M)$ is large if and only if $\mbox{max}(g,h) > 1$.
\end{theorem}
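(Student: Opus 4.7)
My plan is to prove both directions of the equivalence.

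For the ``only if'' direction, I would enumerate the four cases $g,h\in\{0,1\}$. In each, the long exact homotopy sequence of the bundle $\Sigma_h\hookrightarrow M\to\Sigma_g$ shows that $\pi_1(M)$ is either trivial, a quotient of $\Z^2$, equal to $\Z^2$, or fits in an extension $1\to\Z^2\to\pi_1(M)\to\Z^2\to 1$. All of these are virtually polycyclic, hence solvable, and therefore contain no nonabelian free subgroup; in particular they are not large.

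For the ``if'' direction, I would split into subcases according to $(g,h)$. When $g\geq 2$, the bundle projection gives a surjection $\pi_1(M)\twoheadrightarrow\pi_1(\Sigma_g)$ onto a well-known large group, and largeness pulls back through surjections. When $g=0$ and $h\geq 2$, Earle--Eells give $\pi_1(\mathrm{Diff}^+(\Sigma_h))=0$, forcing every $\Sigma_h$-bundle over $S^2$ to be trivial; hence $\pi_1(M)=\pi_1(\Sigma_h)$, which is large. This leaves the substantive case $g=1$, $h\geq 2$.

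In the substantive case I would study the monodromy $\rho\colon\pi_1(T^2)=\Z^2\to\mathrm{MCG}(\Sigma_h)$. Since pseudo-Anosov centralisers in $\mathrm{MCG}(\Sigma_h)$ are virtually cyclic, $\rho(\Z^2)$ is either virtually cyclic or a rank-$2$ subgroup containing no pseudo-Anosov element. In the virtually cyclic case, after passing to a finite-index $\Z^2_0\leq\Z^2$ I would choose a basis $(v,w)$ of $\Z^2_0$ with $\rho(w)=1$, so that the corresponding finite cover of $M$ splits as $S^1_w\times N_\phi$ with $N_\phi=\Sigma_h\rtimes_{\rho(v)}S^1$. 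For $h\geq 2$, $\pi_1(N_\phi)$ is always large by $3$-manifold technology: hyperbolic by Thurston and then large by Agol--Wise \cite{Ag12,Wi12} when $\phi=\rho(v)$ is pseudo-Anosov; JSJ-decomposable into hyperbolic and/or negatively curved Seifert pieces when $\phi$ is reducible of infinite order; and Seifert fibred over a base orbifold of negative Euler characteristic (since $\chi(\Sigma_h)<0$) when $\phi$ is periodic. Thus $\pi_1(M)$ virtually contains $\Z\times\pi_1(N_\phi)$, and so is large.

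The main obstacle is the remaining rank-$2$ sub-case. Birman--Lubotzky--McCarthy supplies a non-empty $\rho$-invariant multicurve $C\subset\Sigma_h$; after a further finite cover I may assume $\rho$ fixes each component of $C$. In the pure-Dehn-twist sub-case the normal closure $K=\langle\langle C\rangle\rangle\lhd\pi_1(\Sigma_h)$ is $\rho(\Z^2)$-invariant, and the induced action on $\pi_1(\Sigma_h)/K$ is trivial, because each Dehn twist along a component of $C$ conjugates by that very component, which dies in the quotient. This produces a surjection $\pi_1(M)\twoheadrightarrow(\pi_1(\Sigma_h)/K)\times\Z^2$; a Seifert--van Kampen computation identifies $\pi_1(\Sigma_h)/K$ as a non-trivial free product of surface and cyclic groups, which is large for $h\geq 2$ whenever $|C|\geq 1$. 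The complementary sub-case, in which $\rho$ restricts to a pseudo-Anosov on some component of $\Sigma_h\setminus C$, is then reduced to the virtually cyclic analysis on that sub-bundle and patched back into $\pi_1(M)$ via a Bass--Serre decomposition along the separating $\Z^3=\pi_1(T^3)$ subgroups carried by $C\times T^2$.
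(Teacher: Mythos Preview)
Your overall strategy---analysing the monodromy $\rho:\Z^2\to\mathrm{MCG}(\Sigma_h)$ directly and splitting on whether $\rho(\Z^2)$ is virtually cyclic or rank $2$---is different from the paper's, which instead views $M$ as a $3$-manifold bundle over $S^1$ and runs the Nielsen--Thurston trichotomy on the monodromy of the $3$-dimensional fibre $F_t$. Your ``only if'' direction, the cases $g\geq 2$ and $g=0$, the virtually-cyclic case, and the pure Dehn-twist sub-case are all sound; the last is a pleasant argument that the paper does not use.

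The genuine gap is in your complementary sub-case. Knowing that a vertex group $\pi_1(M_S)$ in a Bass--Serre decomposition of $\pi_1(M)$ with $\Z^3$ edge groups is large does \emph{not} by itself force $\pi_1(M)$ to be large: largeness of a subgroup, even of a vertex group, does not propagate upward through a graph of groups without additional input. What is actually needed is a finite quotient of (a finite-index subgroup of) $\pi_1(M)$ in which the image of some edge group is strictly smaller than the image of the adjacent vertex group; only then does the HNN/Bass--Serre machinery of \cite{Se80} yield a nonabelian free quotient. Producing such a finite quotient is exactly where the paper does the real work in its reducible case: it invokes separability of the JSJ torus subgroup in the $3$-dimensional fibre \cite{LN91}, proves a virtual-extension claim to push this finite quotient up to $\pi_1(M)$, and arranges (via \cite{Koj87}) for the torus to be nonseparating before applying the HNN argument. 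Your sentence ``reduced to the virtually cyclic analysis on that sub-bundle and patched back into $\pi_1(M)$ via a Bass--Serre decomposition'' skips precisely this step, and there is no evident shortcut. Note also that $M_S$ has boundary and its fibre $S$ is a surface with boundary, so the closed $3$-manifold largeness results you cite for $N_\phi$ would themselves need adaptation (the centraliser of a pseudo-Anosov in $\mathrm{MCG}(S,\partial S)$ is not virtually cyclic but virtually $\Z^{1+|\partial S|}$, the extra factors being boundary twists). In effect your complementary sub-case collapses back to an argument of the same shape and difficulty as the paper's case (3).
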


\begin{proof} The ``only if" part of the statement is elementary. When $g > 1$, the ``if" part follows from the surjectivity of the map $\pi_1(M) \to \pi_{1}(\Sigma_{g})$ and the fact that the fundamental group of a surface of genus $g$ admits a surjection onto the free group on $g$ generators. So the interesting case is $\Sigma_{h} \hookrightarrow M \rightarrow T^2$, for $h > 1$. Choose a homology basis $\{s,t \in H_1(T^2)\}$. This determines a marking $S^1_{s} \times S^1_{t}$ of the base of the fibration.
Correspondingly, we can consider the fibration $M \rightarrow S^1_{s}$ with $3$--dimensional fiber $F_t$ that is itself a $\Sigma_{h}$--bundle over $S^1_{t}$, the monodromy of the latter fibration being the restriction of the monodromy of $M$ along $S^{1}_{t}$. The fibration $M \rightarrow S^1_{s}$ arises then as mapping torus of an automorphism $\phi : F_t \to F_t$. By the Nielsen--Thurston classification of automorphisms of surfaces, we can now split the problem in three cases, depending on the isotopy class of the monodromy of $F_t$.
\begin{enumerate}
\item The monodromy of $F_{t} \rightarrow S^1_t$ is pseudo--Anosov. In this case $F_t$ is hyperbolic, whence $Aut(F_t)$ is a finite group so that $\phi \in Aut(F_t)$ can be assumed to have finite order $p$. The cover of $M$ determined by \[ \pi_{1}(M) \rightarrow \pi_{1}(S^1_{s}) \rightarrow \Z_p \] is then the product $F_t \times S^{1}_s$. By the work of Agol and Wise
    (see again \cite{Ag12,Wi12,AFW12}), $\pi_{1}(F_t)$ is large, and the result follows.
\item The monodromy of $F_{t} \rightarrow S^1_t$ is periodic. Denote by $q$ its period. The cover of $M$ determined by \[ \pi_{1}(M) \rightarrow \pi_{1}(S^1_{t}) \rightarrow \Z_q \] is then the product $F_s \times S^{1}_t$, where $F_s$ is the $\Sigma_{h}$--bundle over $S^1_{s}$, the monodromy of the latter fibration being the restriction of the monodromy of $M$ along $S^{1}_{s}$. Invoking the work of Agol and Wise again,  $\pi_{1}(F_s)$ is large and the result follows.
\item The monodromy of $F_{t} \rightarrow S^1_t$ is reducible. 
 This implies (see e.g. \cite[Theorem~2.15]{CSW11}) that, after suitable isotopy of the monodromy, there is an incompressible JSJ torus $T \subset F_{t}$, intersecting each fiber in a disjoint union of circles preserved by the monodromy.
  Recall that $\pi_1(T) \subset \pi_1(F_t)$ is separable by \cite{LN91}, i.e. for any $\gamma \in \pi_{1}(F_{t}) \setminus \pi_1(T)$, there exist an epimorphism to a finite group $\alpha : \pi_{1}(F_{t}) \to Q$ such that $\alpha(\gamma) \notin \alpha(\pi_{1}(T))$. We now need the following virtual extension result:
\begin{claim}
Let $G = \Gamma  \rtimes \Z$ where $\Gamma$ is a finitely generated group, and let $\a : \Gamma \rightarrow Q$ be an epimorphism onto a finite group; then there exists an integer $d$ such that $\a$ extends to the normal
 subgroup $G_d = \Gamma  \rtimes d \Z \lhd G$ with $\a (\gamma,m) = \a(\gamma)$. \end{claim}

This claim is well-known (see e.g. \cite{Bu11}), but for the reader's convenience we give a quick proof.
We denote by $\phi$ the automorphism of $\Gamma$ which corresponds to the semidirect product.
Note that $\hom(\G,Q)$ is a finite set (here we need that $\G$ is finitely generated). It follows that
$\G$ contains only finitely  subgroups such that the quotient is isomorphic to $Q$. Therefore there exists an $r\in \N$ such that $\phi^r(\ker(\a))=\ker(\a)$. In particular $\phi^r$ induces an automorphism of $Q = \G/\ker(\a)$.
Since $\G/\ker(\a)$ is finite there exists an $s\in \N$ such that $\phi^{rs}$ induces the identity on $\G/\ker(\a)$.
We write $d:=rs$.
We will now show that $\ker(\a) \rtimes d\Z$ is a normal subgroup of $\G\rtimes d\Z$,
 which clearly implies the claim.
Let $(g,kd)\in \ker(\a)\rtimes d\Z$ and $(h,ld)\in \G\rtimes \Z$. Then
\[ \ba{rcl} (h,ld)^{-1}(g,kd)(h,ld)&=&(\phi^{-ld}(h^{-1}),-ld)(g,kd)(h,ld)\\
 &=&(\phi^{-ld}(h^{-1}),-ld)(g\phi^{kd}(h),kd+ld)\\
 &=&(\phi^{-1}(h^{-1})\phi^{-ld}(g)\phi^{kd-ld}(h),kd)\\
 &=&(\phi^{-ld}(h^{-1}g\phi^{kd}(h)),kd).\ea \]
Recall that $\phi^{kd}(h)h^{-1}$ lies in $\ker(\a)$, it thus follows that $h^{-1}g\phi^{kd}(h)$ lies in $\ker(\a)$.
Furthermore $\phi^d$ preserves $\ker(\a)$, it thus follows that $\phi^{-ld}(h^{-1}g\phi^{kd}(h))$ also lies in $\ker(\a)$.  This concludes the proof of the claim.

 The claim applies to the fundamental group of a fibration over $S^1$, and asserts that an epimorphism from the fundamental group of the fiber virtually extends, and does so in such a way that the epimorphism is constant along the orbit of the $\Z$--action on the fundamental group of the fiber.

Using the claim for $G = \pi_1(M) = \pi_{1}(F_t) \rtimes \pi_{1}(S^1_{s})$ it now follows from \cite[Proposition 5]{Koj87}
that, possibly after going to  a finite cover of  $M$, we can assume that $T \subset F_{t}$ is nonseparating.
Also, as the number of JSJ tori of  $F_{t}$ is finite (see e.g. \cite[Theorem 3.4]{Bo02})
 we can assume (perhaps up to going to a cover of  $M$ and up to an isotopy of $\phi$), that the automorphism $\phi : F_t \to F_t$  restricts to an automorphism of $T$, which implies that there exists a nonseparating $3$--manifold $\Lambda \subset M$ that restricts to $T$ in each fiber of $M \rightarrow S^1_{s}$. We can now proceed along the lines of \cite[Lemma 2.4]{Lub96} to complete the proof: consider the commutative diagram
\[ \xymatrix{ 1 \ar[r] & \pi_1(T) \ar[d] \ar[r] &\pi_1(\Lambda)\ar[d]\ar[r]  & \Z \ar[d]^{\cong}\ar[r] & 1 \\  1 \ar[r] & \pi_1(F_t \sm \nu  T ) \ar[d] \ar[r] &\pi_1(M \setminus  \nu \Lambda) \ar[d]\ar[r] & \Z \ar[d]^{\cong} \ar[r] & 1 \\  1 \ar[r] & \pi_1(F_t) \ar[r] &\pi_1(M)\ar[r] & \Z \ar[r] & 1 ,}\]
where $\nu$ denotes an open tubular neighborhood. Note that the leftmost and rightmost vertical maps are injective. It follows that the middle vertical maps are injective as well.

 We now pick an identification $\overline{ \nu \Lambda} = \Lambda\times [-1,1]\subset M$. We write $C:=M\sm \nu \Lambda$
 and given $\tau \in [-1,1]$ we write $\Lambda_\tau:=\Lambda\times \tau$.  By the above we have a subgroup inclusion $\pi_1(\Lambda_\tau) \leq \pi_{1}(C) \leq \pi_{1}(M)$  for $\tau = \pm 1$.
 We can now view $\pi_1(M)$ as an HNN extension
 \[ \pi_1(M)=\ll \pi_1(C),t\,|\, \pi_1(\Lambda_{-1})=t\pi_1(\Lambda_1)t^{-1}\rr.\]
 By separability, there exists an epimorphism $\a : \pi_{1}(F_t) \rightarrow Q$ onto a finite group such that $\a(\pi_1(T)) \lneq \a(\pi_{1}(F_{t} \setminus \nu T))$. 
 This entails, using the claim again, that (perhaps on a cover of $M$) we have an epimorphism $\a : \pi_1(M) \rightarrow Q$ such that $\a(\pi_1(\Lambda_{\pm 1})) \lneq \a(\pi_{1}(C))$.
 We now write $A:=\a(\pi_1(C))$ and $B_\pm:=\alpha(\pi_1(\Lambda_{\pm 1})) \lneq A$.
 It  follows from the properties of an HNN extension that $\a$ induces an epimorphism
 \[ \pi_1(M)=\ll \pi_1(C),t\,|\, \pi_1(\Lambda_{-1})=t\pi_1(\Lambda_1)t^{-1}\rr\to
 K:=\ll A,t\,|\, B_-=tB_+t^{-1}\rr.\]
Since $A$ is a finite group it  follows from \cite[Prop~11~p.~120]{Se80} and from \cite[Exercise~3~p.~123]{Se80} that $K$ admits a finite index subgroup which is a nonabelian free group. In particular, $\pi_1(M)$ contains  a finite index subgroup that surjects onto a nonabelian free group, i.e. $\pi_1(M)$ is large.
\end{enumerate}
\end{proof}

We note that the proof Theorem  \ref{thm:surfacesurface} implies, more generally, that $\pi_1(M)$ is large whenever $M$ is a bundle over $S^1$
with fiber a closed irreducible $3$-manifold with non-trivial JSJ decomposition.

Denoting by $\pi_g$ the fundamental group of an orientable surface of genus $g$ we obtain
from Theorem  \ref{thm:surfacesurface}  immediately the following result:

\begin{proposition} Let $1 \rightarrow \pi_{h}  \rightarrow G \rightarrow \pi_{g} \rightarrow 1$ be an extension of a surface group $\pi_g$ by a surface group $\pi_h$. Then if $G$ is a SCY group either $g = h = 1$ or $G$ equals the trivial group or $\Z^2$.
\end{proposition}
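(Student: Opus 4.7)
The plan is to argue by contradiction: assume $G$ is an SCY group and $\max(g,h)>1$, and deduce a conflict with the virtual Betti number bound of Theorem \ref{thm:bauerli} by using Theorem \ref{thm:surfacesurface} to force $G$ to be large.

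First I would establish that $\max(g,h)>1$ forces $G$ to be large, splitting into three cases. If $g\geq 2$, then $G$ admits $\pi_g$ as a quotient, and $\pi_g$ surjects onto a nonabelian free group of rank $g$; composing yields a surjection $G\twoheadrightarrow F_g$, so $G$ is large without passing to any cover. If $g=0$ and $h\geq 2$, then $G=\pi_h$ is itself a surface group of genus at least two, hence again surjects onto a nonabelian free group. The delicate case is $g=1$ and $h\geq 2$: since $\pi_h$ has trivial centre for $h\geq 2$, the abstract extension
\[ 1\to \pi_h\to G\to \Z^2\to 1 \]
is classified by a homomorphism $\Z^2\to \mbox{Out}(\pi_h)\cong\mbox{MCG}(\Sigma_h)$ via Dehn--Nielsen--Baer. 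Since the components of $\mbox{Diff}^+(\Sigma_h)$ are contractible (Earle--Eells), such a homomorphism is induced by an honest $\Sigma_h$--bundle $\Sigma_h\hookrightarrow M\to T^2$ with $\pi_1(M)=G$. Theorem \ref{thm:surfacesurface} then applies and gives that $G$ is large.

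Second, I would extract the contradiction. Largeness means $vb_1(G)=\infty$, so there is a finite--index subgroup $H\leq G$ with $b_1(H)\geq 5$. Any SCY manifold realizing $G$ admits a finite cover with fundamental group $H$, and this cover is again SCY: the symplectic form pulls back, and the canonical class remains trivial in $H^2$. Hence $H$ is an SCY group with $b_1(H)>0$, so Theorem \ref{thm:bauerli} forces $b_1(H)\leq 4$, contradicting $b_1(H)\geq 5$.

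Therefore $\max(g,h)\leq 1$, and the conclusion follows by enumerating the four possibilities: $(g,h)=(0,0)$ gives $G=1$; both $(0,1)$ and $(1,0)$ give $G\cong\Z^2$; and $(1,1)$ is the admitted case $g=h=1$. The main obstacle is the case $g=1$, $h\geq 2$, where one must promote the purely algebraic extension to a genuine surface bundle before Theorem \ref{thm:surfacesurface} becomes applicable; this uses the classification of extensions by centerless surface groups together with the standard identification of surface bundles over $\Sigma_g$ with representations $\pi_g\to \mbox{MCG}(\Sigma_h)$.
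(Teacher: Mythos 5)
Your proof is correct and follows essentially the paper's own route: the paper deduces the proposition immediately from Theorem \ref{thm:surfacesurface} together with the bound $vb_1(G)\leq 4$ coming from Theorem \ref{thm:bauerli} (finite covers of SCY manifolds are SCY), which is exactly the contradiction you run, followed by the same enumeration of the cases with $\mbox{max}(g,h)\leq 1$. Your extra care in the case $g=1$, $h\geq 2$ --- promoting the abstract extension to an honest $\Sigma_h$--bundle over $T^2$ via centerlessness of $\pi_h$ and Earle--Eells --- fills in a step the paper leaves implicit; the only slight imprecision is that Dehn--Nielsen--Baer identifies $\mbox{Out}(\pi_h)$ with the \emph{extended} mapping class group, which is harmless since the components of the full diffeomorphism group $\mbox{Diff}(\Sigma_h)$ are also contractible (or, alternatively, pass to the index at most two subgroup of $\Z^2$ with orientation--preserving monodromy, as largeness of a finite--index subgroup already gives largeness of $G$).
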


All fundamental groups allowed by this proposition are known to be SCY \textit{except} for $G = \Z^2$.  We are not aware of any method to exclude this case. If such a manifold did exist, though, it would be homeomorphic to $S^2 \times T^2$ (see e.g. \cite[Corollary 6.11.1]{Hi02}) but as $K = 0$ it would not be diffeomorphic to it,  a very interesting manifold indeed.

\begin{remark} While it is not clear whether the constraints of Theorem \ref{thm:bauerli} are sufficient to characterize SCY groups, we can ask whether other constrains are available. The authors of this note proved in \cite{FV11a} that  if a symplectic manifold $M$ with trivial canonical class carries a free circle action, then a stronger version of Theorem \ref{thm:bauerli} holds, namely $vb_1(M;\F_{p}) \leq 4$
for any prime $p$ (the conditions $\chi(M) = \sigma(M) = 0$ are trivially satisfied). The interest of this enhanced result is that it allows one to use, in suitable circumstances, information on the growth of  \textit{mod $p$} homology, like the Lubotzky alternative for linear groups (see e.g. ~\cite[Window~9,~Corollary~18]{LS03} and \cite[Theorem 1.3]{La09}), that is stronger than what is available with $\Z$--coefficients. It is not perhaps unreasonable to conjecture therefore that any symplectic $4$--manifold with $K = 0$ satisfies $vb_1(M;\F_{p}) \leq 4$. As a corollary of a result of Lackenby \cite[Theorem 1.10]{La10} we can assert a weak form of this result: Given any epimorphism $\phi : \pi(M) \to \Z$  (which exist as $b_1(M) \geq 1$), the cyclic covers $M_k$ with fundamental group $\phi^{-1}(k\Z)$ must satisfy $\limsup_{k} b_{1}(M_{k};\F_p) < \infty$, otherwise $\pi_1(M)$ would be large.
\end{remark}

%=====================================
\section{The Homeomorphism Type of SCY $4$--manifolds} \label{sec:topclass}

In this section we prove Theorem \ref{thm:main}. We start with the following theorem, which paraphrases  (\cite[Theorem 6]{E97}) of Eckmann:

\begin{theorem}  \textbf{\emph{(Eckmann)}} \label{thm:eckmann} Let $M$ be a $4$--manifold with  $b_{0}^{(2)}(M) = b_{1}^{(2)}(M) = b_{2}^{(2)}(M) = 0$ whose fundamental group $G = \pi_{1}(M)$ satisfies $H^2(G,\Z[G]) = 0$; then either $G$ is virtually infinite cyclic, or $M = K(G,1)$. \end{theorem}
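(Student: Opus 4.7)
The plan is to show that the universal cover $\widetilde M$ is contractible, so that $M = K(G,1)$; the virtually infinite cyclic case will emerge as the exception where this fails. Since $b_0^{(2)}(M) = 0$ forces $G$ to be infinite, the non-compact $\widetilde M$ already satisfies $H_0(\widetilde M) = \Z$, $H_1(\widetilde M) = 0$, and $H_4(\widetilde M) = 0$. The task therefore reduces to showing $H_2(\widetilde M) = H_3(\widetilde M) = 0$. By Hurewicz and Poincar\'e duality for $M$ with $\Z[G]$--coefficients, these groups are $\pi_2(M) \cong H^2(M;\Z[G])$ and $H^1(M;\Z[G])$ respectively; the $L^2$--Euler characteristic formula also yields $\chi(M) = 0$ as a numerical consistency check.

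To compute these, I would compare $M$ with $BG = K(G,1)$ via the classifying map $c: M \to BG$. Since $BG$ can be built from $M$ by attaching cells of dimension $\geq 3$ (to kill the higher homotopy groups), the relative cellular complex $C_*(BG, M; \Z[G])$ is concentrated in degrees $\geq 3$, so $H^k(BG, M; \Z[G]) = 0$ for $k \leq 2$. The long exact sequence of the pair, together with the hypothesis $H^2(G;\Z[G]) = 0$, then yields an isomorphism $H^1(G;\Z[G]) \xrightarrow{\cong} H^1(M;\Z[G])$ and an injection $H^2(M;\Z[G]) \hookrightarrow H^3(BG, M; \Z[G])$. Consequently $H_3(\widetilde M) \cong H^1(G;\Z[G])$, whose rank is determined by the number of ends of $G$: zero for one-ended $G$, isomorphic to $\Z$ precisely when $G$ is two-ended (i.e.\ virtually infinite cyclic), and free of infinite rank otherwise. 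This identifies the virtually cyclic case as the exceptional one in the stated dichotomy.

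The hard part will be upgrading the injection $\pi_2(M) \hookrightarrow H^3(BG, M; \Z[G])$ into the vanishing $\pi_2(M) = 0$ using the remaining hypothesis $b_2^{(2)}(M) = 0$. The plan is a dimension argument over the group von Neumann algebra $\mathcal N(G)$: tensoring $H^2(M;\Z[G])$ with $\mathcal N(G)$ produces a module of vanishing von Neumann dimension, and L\"uck's dimension theory should allow one to deduce vanishing of the $\Z[G]$--module itself. A parallel $L^2$ argument using $b_1^{(2)}(M) = 0$ should rule out the infinitely-ended possibility for $H^1(G;\Z[G])$, leaving only the one-ended and the exceptional two-ended cases. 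Once $H_2(\widetilde M) = H_3(\widetilde M) = 0$ is established in the non-virtually-cyclic case, $\widetilde M$ becomes a simply connected $4$--manifold with trivial reduced homology, hence contractible by Hurewicz and Whitehead, and $M = K(G,1)$. The main obstacle is making the $L^2$-to-$\Z[G]$ passage rigorous: bare $L^2$ dimension vanishing does not in general force a $\Z[G]$--module to be zero, so one needs to combine the cohomological injection above with the $L^2$ vanishing in a careful way that retains algebraic information about the underlying $\Z[G]$--module.
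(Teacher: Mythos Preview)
The paper does not give its own proof of this statement; it is quoted from Eckmann's paper \cite[Theorem~6]{E97}. The only additional information the paper supplies is the remark (immediately after Theorem~\ref{thm:comb}) that in Eckmann's argument one actually obtains $\pi_2(M)\cong H^2(G;\Z[G])$, and that this is the sole obstruction to asphericity.

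Your outline is on the right track and matches the natural strategy: reduce to showing $H_2(\widetilde M)=H_3(\widetilde M)=0$, identify these via Poincar\'e duality with $H^2(M;\Z[G])$ and $H^1(M;\Z[G])\cong H^1(G;\Z[G])$, and use the theory of ends together with $b_1^{(2)}=0$ to dispose of $H_3$ (ruling out infinitely many ends, leaving the one--ended case and the exceptional two--ended, i.e.\ virtually cyclic, case). That part is fine.

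The genuine gap is exactly where you locate it: the $\pi_2$ step. From the long exact sequence of the pair $(BG,M)$ you only extract an injection $\pi_2(M)\cong H^2(M;\Z[G])\hookrightarrow H^3(BG,M;\Z[G])$, together with the (automatic) injection $H^2(G;\Z[G])\hookrightarrow \pi_2(M)$. You then propose to force $\pi_2(M)=0$ from $b_2^{(2)}(M)=0$ via von Neumann dimension, but---as you yourself point out---vanishing $L^2$--dimension does not in general kill a $\Z[G]$--module, so this step is not completed. According to the paper's remark, the actual content of Eckmann's proof is that under the $L^2$ hypotheses the map $H^2(G;\Z[G])\to\pi_2(M)$ is an \emph{isomorphism}; the hypothesis $H^2(G;\Z[G])=0$ then gives $\pi_2(M)=0$ directly. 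Supplying that identification (rather than a bare dimension argument) is what is missing from your sketch, and it is precisely the substantive input from \cite{E97}.
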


Combining this theorem with Lemma \ref{lemma:lueck} we obtain the following result, equivalent to Theorem \ref{thm:main}:

\begin{theorem} \label{thm:comb}   Let $M$ be an SCY $4$--manifold  with fundamental group $G$.  Assume that $G := \pi_{1}(M)$ is residually finite and satisfies $b_{1}(G) > 0$ and $H^2(G,\Z[G]) = 0$. Then $M = K(G,1)$. \end{theorem}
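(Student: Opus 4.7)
The plan is to apply Eckmann's Theorem \ref{thm:eckmann} directly. Since the hypothesis $H^2(G;\Z[G])=0$ is already in place, I need to verify two things: that $b_0^{(2)}(M)$, $b_1^{(2)}(M)$, and $b_2^{(2)}(M)$ all vanish, and that $G$ is not virtually infinite cyclic, so that Eckmann's dichotomy collapses to the aspherical conclusion $M=K(G,1)$.

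For the $L^2$-Betti number vanishings I will reuse the three ingredients already used in the proof of Proposition \ref{cor:min}. Since $b_1(G)>0$, the group $G$ is infinite, so $b_0^{(2)}(M)=b_0^{(2)}(G)=0$. Next, Lemma \ref{lemma:lueck} gives $b_1^{(2)}(G)=0$, and combined with the standard identification $b_1^{(2)}(M)=b_1^{(2)}(G)$ (the first $L^2$-Betti number depends only on the $2$-type of the space, and in particular only on $\pi_1$) this delivers $b_1^{(2)}(M)=0$. Finally, the $L^2$-Euler characteristic formula
\[
\chi(M)=2b_0^{(2)}(M)-2b_1^{(2)}(M)+b_2^{(2)}(M)
\]
reduces to $\chi(M)=b_2^{(2)}(M)$, and since $\chi(M)=0$ by Theorem \ref{thm:bauerli}(2) we obtain $b_2^{(2)}(M)=0$.

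To exclude the virtually infinite cyclic case, suppose for contradiction that $G$ contained a finite index subgroup $H\cong\Z$. The corresponding finite cover $\widetilde M\to M$ inherits an SCY structure (being symplectic with the pulled-back form, which still has $K=0$), so $\pi_1(\widetilde M)\cong\Z$ would be a residually finite SCY group, forcing $b_1(\pi_1(\widetilde M))\geq 2$ by Theorem \ref{thm:bauerli}(1); this contradicts $b_1(\Z)=1$. Having discarded this alternative, Eckmann's Theorem \ref{thm:eckmann} applies and yields $M=K(G,1)$. The actual work is absorbed into Lemma \ref{lemma:lueck} (via the Bauer--Li constraints and L\"uck's approximation theorem); the remaining step is essentially bookkeeping, and the only point that genuinely deserves a moment of care is the identification $b_1^{(2)}(M)=b_1^{(2)}(G)$, which is what allows the group-theoretic vanishing supplied by Lemma \ref{lemma:lueck} to be fed into the manifold-level statement of Eckmann's theorem.
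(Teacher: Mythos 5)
Your proof is correct and follows essentially the same route as the paper: vanishing of $b_0^{(2)}$, $b_1^{(2)}$ (via Lemma \ref{lemma:lueck}), and $b_2^{(2)}$ (via $\chi(M)=0$ from Theorem \ref{thm:bauerli}), then Eckmann's theorem, with the virtually cyclic case ruled out by the Bauer--Li bound $b_1\geq 2$ applied to finite covers. The only cosmetic difference is that the paper excludes the cyclic alternative by noting $b_1$ is nondecreasing on finite index subgroups, whereas you apply Theorem \ref{thm:bauerli} directly to the covering SCY manifold; both are fine.
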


\begin{proof} As $G = \pi_1(M)$ is infinite, $b_{0}^{(2)}(M)$ vanishes, and so does $b_{1}^{(2)}(M)$, by Lemma  \ref{lemma:lueck}.  The Euler characteristic $\chi(M) = 2 b_{0}^{(2)}(M) -2  b_{2}^{(1)}(M) +  b_{2}^{(2)}(M)$ equals zero by Theorem \ref{thm:bauerli}, hence $b_{2}^{(2)}(M) = 0$ as well. We can apply then Eckmann's Theorem. The case where $G$ is virtually cyclic, i.e. $\Z \lhd_{f.i.} G$, can be excluded: by Theorem \ref{thm:bauerli} we have $b_{1}(G) \geq 2$ and the Betti number is nondecreasing on finite index subgroups.  The statement now follows. \end{proof}

Checking the details of the proof of Theorem \ref{thm:eckmann} it is possible to see that the group  $H^2(G,\Z[G])$ coincides with $\pi_2(M)$, and that this is the only obstruction to $M$ being aspherical. The condition  $H^2(G,\Z[G]) = 0$ is not too restrictive; in particular, it is satisfied by all  (virtual) duality groups of (virtual) cohomological dimension at least $3$, see \cite[Sections~10, 11]{Br94}.

By considering the examples of SCY infrasolvmanifolds we have the corollary:

\begin{corollary} Let $G$ be a SCY group arising as fundamental group of an infrasolvmanifold. Then the corresponding symplectic manifolds are unique up to homeomorphism. \end{corollary}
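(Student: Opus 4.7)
The plan is to assemble Theorem \ref{thm:main} with the Borel conjecture in dimension $4$. Let $M_0$ be the infrasolvmanifold whose fundamental group is $G$, and let $M$ be any SCY $4$-manifold with $\pi_1(M) = G$; I want to show that $M$ and $M_0$ are homeomorphic.

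First I would verify the hypotheses of Theorem \ref{thm:main}. Because $M_0$ is an infrasolvmanifold, $G$ is virtually poly-$\Z$, and in particular residually finite. Since $M_0$ is a closed aspherical $4$-manifold, $G$ is a Poincar\'e duality group of dimension $4$; hence $H^i(G;\Z[G]) = 0$ for $i \ne 4$, and in particular $H^2(G;\Z[G]) = 0$. Moreover, by Theorem \ref{thm:bauerli} we have $b_1(G) \ge 2 > 0$. Therefore Theorem \ref{thm:main} applies and $M$ is homotopy equivalent to $K(G,1)$. Since $M_0$ is itself a $K(G,1)$, we conclude that $M \simeq M_0$.

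To upgrade homotopy equivalence to homeomorphism, I invoke the Farrell--Jones result \cite{FJ90}: the Borel conjecture holds in dimension $4$ for virtually poly-$\Z$ groups, so any homotopy equivalence between two closed aspherical $4$-manifolds with virtually poly-$\Z$ fundamental group is homotopic to a homeomorphism. Applied to the homotopy equivalence $M \simeq M_0$ this gives the desired homeomorphism. Since $M$ was arbitrary, any two SCY $4$-manifolds with fundamental group $G$ are homeomorphic to $M_0$, and hence to each other.

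There is no real obstacle here beyond bookkeeping; the argument is essentially a packaging of three ingredients (the vanishing $H^2(G;\Z[G])=0$ from Poincar\'e duality, the homotopy classification of Theorem \ref{thm:main}, and the topological rigidity of Farrell--Jones). The only subtle point worth flagging is that Theorem \ref{thm:main} produces only a homotopy equivalence to the Eilenberg--MacLane space $K(G,1)$, so one really does need the dimension-$4$ Borel conjecture for this specific class of groups in order to pass to the homeomorphism category.
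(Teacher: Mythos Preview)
Your argument follows essentially the same route as the paper's: verify that $G$ is residually finite (virtually poly--$\Z$), that $H^2(G;\Z[G])=0$ (since $G$ is a $\mathrm{PD}_4$ group), apply Theorem~\ref{thm:main} to get asphericity, and then invoke Farrell--Jones rigidity for virtually poly--$\Z$ groups to upgrade to homeomorphism.

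One small logical slip: you cannot use Theorem~\ref{thm:bauerli} to deduce $b_1(G)>0$, since that theorem already takes $b_1(G)>0$ as a hypothesis. You need to supply $b_1(G)>0$ independently. The paper simply takes this from the context of Section~\ref{sec:examples} (these are precisely the known SCY manifolds with $b_1>0$); alternatively you could argue that $G$ is infinite (being $\mathrm{PD}_4$) and residually finite, so Theorem~\ref{thm:morsza} rules out $b_1(G)=0$. Once this is patched, your proof and the paper's coincide.
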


\begin{proof} Let $M$ be any SCY infrasolvmanifold, and denote $G = \pi_{1}(M)$. The manifold $M$ is an Eilenberg--Maclane space of type $K(G,1)$, from which we deduce that $G$ is a Poincar\'e duality group of cohomological dimension $4$ and $H^2(G,\Z[G]) = 0$. Moreover, $G$ is virtually poly--$\Z$  hence residually finite by a classical result due to Hirsch. We can then apply Theorem \ref{thm:main} to show that any SCY $X$ with fundamental group equal to $G$ is homotopic to $M$. But with virtually poly--$\Z$ groups we can apply to machinery of \cite{FQ90} to deduce that $X$ is actually homeomorphic to $M$, see e.g. \cite[Theorem 2.16]{FJ90}. \end{proof}

Baldridge and Kirk \cite[Conjecture 23]{BK07} formulated the following conjecture:
Let  $X$ and $Y$ be 4-manifolds which  realize the minimum of the Hausmann--Weinberger invariant of a group $G$.
If $X$ and $Y$  have equivalent intersection forms, then they are in fact diffeomorphic.

We will now argue that this conjecture implies in particular that SCY manifolds  with a given group $G$ are  unique up to diffeomorphism.
We first show that
 for a SCY manifold, the intersection form is determined by the fundamental group $G$. In fact  by Theorem \ref{thm:bauerli}  the rank is determined by $b_1(G)$ and the signature is always zero (hence the form is indefinite), and as the characteristic element $K$ vanishes, the parity is even.
The argument is thus completed by the fact that a SCY manifold realizes the minimum value $q(G) = 0$.

%==========================================================

%=========================================================

\end{document}